\begin{document}

	\title{On the classification of rational four-dimensional unital division algebras}
	\author{Gustav Hammarhjelm}
	\maketitle
		
	
	\newcommand{\ad}[1]{\mathrm{ad}_{#1}}
	\newcommand{\Ad}[1]{\mathrm{Ad}_{#1}}
	\newcommand{\ASL}[2]{\mathrm{ASL}(#1,#2)}
	\newcommand{\Aut}[2]{\mathrm{Aut}_{#1}(#2)}
	\newcommand{\bb}[1]{\mathbb{#1}}
	\newcommand{\ceil}[1]{\lceil#1\rceil}
	\newcommand{\Char}{\mathrm{char}\,}
	\newcommand{\complex}[1]{(#1_n,\partial_n)_{n\in\mathbb{Z}}}
	\newcommand{\cyclic}[1]{\langle #1\rangle}
	\newcommand{\Der}[1]{\mathrm{Der}(C^\infty(#1))}
	\newcommand{\dirlim}{\varinjlim}
	\newcommand{\End}[2]{\mathrm{End}_{#1}(#2)}
	\newcommand{\Ext}[1]{\mathrm{Ext}^{#1}}
	\newcommand{\floor}[1]{\lfloor#1\rfloor}
	\newcommand{\fr}[1]{\mathfrak{#1}}
	\newcommand{\Gal}[2]{\mathrm{Gal}(#1/#2)}
	\newcommand{\GL}[2]{\mathrm{GL}(#1,#2)}
	\newcommand{\Hom}{\mathrm{Hom}}
	\newcommand{\id}{\mathrm{id}}
	\newcommand{\image}{\mathrm{im}\,}
	\newcommand{\indexset}[1]{\{1,\ldots,#1\}}
	\newcommand{\inner}[2]{\langle #1,#2\rangle}
	\newcommand{\Int}[1]{\mathrm{Int}\left(#1\right)}
	\newcommand{\inv}[1]{#1^{-1}}
	\newcommand{\invlim}{\varprojlim}
	\newcommand{\leg}[2]{\left(\frac{#1}{#2}\right)}
	\newcommand{\liminfl}[1]{\liminf_{#1\to\infty}}
	\newcommand{\limsupl}[1]{\limsup_{#1\to\infty}}
	\newcommand{\norm}[1]{\left\| #1\right\|}
	\newcommand{\normp}[1]{\left\| #1\right\|_p}
	\newcommand{\pderiv}[2]{\frac{\partial #1}{\partial #2}}
	\newcommand{\rad}{\mathrm{rad}\,}
	\newcommand{\scr}[1]{\mathscr{#1}}
	\newcommand{\SES}[5]{0\longrightarrow#1\overset{#4}{\longrightarrow}#2\overset{#5}{\longrightarrow}#3\longrightarrow 0}
	\newcommand{\sgn}[1]{\mathrm{sgn}\left(#1\right)}
	\newcommand{\SL}[2]{\mathrm{SL}(#1,#2)}
	\newcommand{\Span}[1]{\mathrm{span}_{#1}}
	\newcommand{\smpt}[1]{\setminus\{#1\}}
	\newcommand{\Tor}[1]{\mathrm{Tor}_{#1}}
	\newcommand{\tr}[1]{\mathrm{tr}(#1)}
	\newcommand{\vol}{\mathrm{vol}}
	
		
	\newtheorem{thm}{Theorem}[section]
	\newtheorem*{thm*}{Theorem}
	\newtheorem{clm}[thm]{Claim}
	\newtheorem{fact}[thm]{Fact}
	\newtheorem{lem}[thm]{Lemma}
	\newtheorem{cor}[thm]{Corollary}
	\newtheorem{prop}[thm]{Proposition}
	\newtheorem{conj}[thm]{Conjecture}
	\theoremstyle{definition}
	\newtheorem{defn}[thm]{Definition}
	\newtheorem*{ex}{Example}
	\theoremstyle{remark}
	\newtheorem*{rem}{Remark}
	\newtheorem*{exer}{Exercise}
	\theoremstyle{remark}
	\newtheorem*{sol}{Solution}
	\theoremstyle{remark}
	
	\begin{abstract}
		In \cite{dieterich2017onFour}, the category $\mathscr{C}(k)$ of four-dimensional unital division algebras, whose right nucleus is non-trivial and whose automorphism group contains Klein's four group $V$, is studied over a general ground field $k$ with $\Char k\neq 2$. In particular, the objects in $\mathscr{C}(k)$ are exhaustively constructed from parameters in $k^3$ and explicit isomorphism conditions for the constructed objects are found in terms of these parameters.
			
		In this paper, we specialize to the case $k=\bb{Q}$ and present results towards a classification of $\mathscr{C}(\bb{Q})$. In particular, for each field $\ell$ with $[\ell:k]=2$ we present explicity a two-parameter family of pairwise non-isomorphic non-associative objects in $\mathscr{C}(\bb{Q})$ that admit $\ell$ as a subfield and we provide a method for classifying the full subcategory of central skew fields admitting $\ell$ as a subfield and $kV$-submodule. We also classify the subcategory of $\mathscr{C}(\bb{Q})$ of all four-dimensional Galois extensions of $\bb{Q}$ with Galois group $V$ that admit $\ell$ as a subfield.
	\end{abstract}

	\section{Background and setup}
	\label{secIntro}
	
	In this paper, an \emph{algebra} over a field $k$, or more briefly, a $k$-\emph{algebra}, is a $k$-vector space $A$ equipped with a bilinear multiplication map, written by juxtaposition as $(a,b)\mapsto ab$ for $a,b\in A$. Note that we do not require $A$ to be associative. A $k$-algebra $A$ is said to be a \emph{division} \emph{algebra} if $A\neq\{0\}$ and the linear endomorphisms $x\mapsto ax$ and $x\mapsto xa$ of $A$ are bijective for all $a\in A\smpt{0}$.
	
	Given a category $\scr{C}$ such that the collection of isoclasses of $\scr{C}$ forms a set, an interesting problem is to \emph{classify} $\scr{C}$. In this paper, a \emph{classification} of $\scr{C}$ is understood to be an explicitly given transversal of the isoclasses of $\scr{C}$, that is, an explicitly given exhaustive and irredundant set of representatives. A challenging problem, open in most cases, is trying to classify categories of division algebras; for classifications of particular categories of division algebras, see e.g. \cite{darpo2011classification}, \cite{dieterich1998klassifikation}, \cite{dieterich2005classification}.
	
	Let $k$ be a field and let $V$ be Klein's four group. Let $\mathscr{C}(k)$ be the category of four-dimensional unital division $k$-algebras $A$ such that $V\subset \Aut{k}{A}$ and \[N_r(A):=\{n\in A\mid (ab)n=a(bn),\forall a,b\in A\},\]
	the \emph{right nucleus} of $A$, 
	is non-trivial, meaning $k\subsetneq N$. The morphisms in $\mathscr{C}(k)$ are the non-zero algebra morphisms. These are injective due to the division property, hence bijective, as they are endomorphisms of a finite-dimensional vector space. It follows that $\mathscr{C}(k)$ is a \emph{groupoid}, i.e. a category in which all morphisms are isomorphisms. Note that $V\subset \Aut{k}{A}$ gives $A$ the structure of a $kV$-module.
	
	For a given field $k$, let $\scr{Q}(k)$ denote the category of all field extensions $k\subset\ell$ with $[\ell:k]=2$, in which morphisms are the $k$-linear field morphisms. Hereafter a field $k$ with $\Char k\neq 2$ and $\ell\in\scr{Q}(k)$ are fixed. Let $\overline{x}$ denote the Galois conjugate of $x\in\ell$ and let $n_{\ell/k}(x)=x\overline{x}$ be the norm of the field extension $k\subset \ell$. 
	
	For $c:=(c_1,c_2,c_3)\in k^3$ and $x,y\in \ell$ let \begin{equation}
	\label{eqnM(x,y)}
	M_{c}(x,y)=\begin{pmatrix}x & c_2y+c_3\overline{y}\\y & (1-c_1)x+c_1\overline{x}\end{pmatrix}\in \ell^{2\times 2}
	\end{equation}
	and construct the four-dimensional $k$-algebra $A(\ell,c)$ by $A(\ell,c)=\ell^2$ as a vector space over $k$ and multiplication given by
	\begin{equation}
		\label{eqnMultA}
		\begin{pmatrix}x\\y\end{pmatrix}\begin{pmatrix}z\\w\end{pmatrix}=M_{c}(x,y)\begin{pmatrix}z\\w\end{pmatrix}
	\end{equation}
	for $x,y,z,w\in \ell$, where the right hand side of (\ref{eqnMultA}) is ordinary multiplication of $\ell$-matrices. For the remainder of this paper, every occurrence of the word "construction" refers to the above construction of $A(\ell,c)$.
 	Define $C_\ell=\{c\in k^3\mid A(\ell,c) \text{ is a division algebra}\}$ and call $C_\ell$ the set of \emph{admissible triples}. Equivalently, $c$ is admissible if and only if $\det M_{\ell,c}(x,y)\neq 0$ for all $(x,y)\in \ell^2\smpt{(0,0)}$.
	
	For $\gamma\in \Aut{k}{A}$ and $\lambda\in k$, let $E_\lambda(\gamma)=\{x\in A\mid \gamma(x)=\lambda x\}$.
	As in \cite{dieterich2017onFour}, let 
	\[\mathscr{C}_\ell(k)=\{A\in \mathscr{C}(k)\mid \exists n\subset A,\exists\alpha,\beta\in \Aut{k}{A}:\ell \cong n=E_{1}(\beta)\subset N_r(A),\inner{\alpha}{\beta}\cong V \}.\]
	From \cite[Prop. 4.1]{dieterich2017onFour} it is seen that
	\[\mathscr{C}_\ell(k)=\{A\in \mathscr{C}(k)\mid \exists n\subset A, n\cong \ell \text{ and } n\text{ is a }kV\text{-submodule of }A\}.\] By \cite[Theorem 4.5]{dieterich2017onFour} each $A\in\mathscr{C}_\ell(k)$ is isomorphic to $A(\ell,c)$ for some $c\in C_\ell$. 
	
	Due to \cite[Prop. 2.2]{dieterich2017onFour} the category $\mathscr{C}(k)$ decomposes as a coproduct $\mathscr{C}(k)=\mathscr{K}(k)\amalg\mathscr{S}(k)\amalg\mathscr{N}(k)$ of the full subcategories
	$\mathscr{K}(k)$, $\mathscr{S}(k)$, $\mathscr{N}(k)$ consisting of field extensions, skew fields that are central over $k$ and algebras with right nucleus of dimension $2$, respectively. Note that the algebras in $\mathscr{N}(k)$ are not associative.
	If $\scr{L}$ is a classifying list of $\scr{Q}(k)$ we have, from \cite[Prop 4.8]{dieterich2017onFour}, that
	\begin{equation}
	\label{eqnCovering}
	\scr{C}(k)=\left(\bigcup_{\ell\in\scr{L}}\scr{K}_\ell(k)\right)\amalg\left(\bigcup_{\ell\in\scr{L}}\scr{S}_\ell(k)\right)\amalg \left(\coprod_{\ell\in\scr{L}}\scr{N}_\ell(k)\right)
	\end{equation}
	where $\scr{K}_\ell(k)$, $\scr{S}_\ell(k)$, $\scr{N}_\ell(k)$ will be called \emph{local subcategories} and are defined as intersections of $\scr{C}_\ell(k)$ with the corresponding categories $\mathscr{K}(k)$, $\mathscr{S}(k)$, $\mathscr{N}(k)$. 
	
	Given a ring $R$ and $S\subset R$ set $S^*=S\smpt{0}$ and $S_{\mathrm{sq}}=\{s^2\mid s\in S\}$. Given a group $G$ with identity $e$ we set $G^\circ=G\smpt{e}$ and call $G^\circ$ a \emph{punctured group}. Combining \cite[Lemma 3.4, Corollary. 5.4]{dieterich2017onFour} we reproduce the following result.
	\newpage
	\begin{prop}
	\label{bigProp}
	Let $k$ be a field with $\Char k\neq 2$, $\ell\in\scr{Q}(k)$ and $c,d\in C_{\ell}$.
	\begin{itemize}
	\item[\emph{(i)}] We have $A(\ell,c)\cong A(\ell,d)$ in $\scr{C}(k)$ if and only if there exists $x\in \ell^*$ such that $(c_1,c_2,c_3)=(d_1,x^2d_2,n_{\ell/k}(x)d_3)$.
	\item[\emph{(ii)}] If $S(\ell)\subset k^*$ is a transversal of $(k^*/(k^*\cap \ell_{\mathrm{sq}}))^\circ$ then $\{A(\ell,(0,s,0))\mid s\in S(\ell)\}$ classifies $\scr{K}_\ell(k)$.
	\item[\emph{(iii)}] If $T(\ell)\subset k^*$ is a transversal of $(k^*/n_{\ell/k}(\ell^*))^\circ$ then $\{A(\ell,(1,0,t))\mid t\in T(\ell)\}$ classifies $\scr{S}_\ell(k)$.
	\end{itemize}
	\end{prop}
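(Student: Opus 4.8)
The three assertions restate \cite[Lemma 3.4]{dieterich2017onFour} together with \cite[Corollary 5.4]{dieterich2017onFour}, so the plan is to recollect that argument. The first move, common to all three parts, is to rewrite the construction in ``crossed product'' form: setting $f=(0,1)$ one checks from (\ref{eqnMultA}) that $(1,0)$ is the two-sided unit, that $\ell\times\{0\}$ is a subalgebra isomorphic to $\ell$ and is a $kV$-submodule of $A(\ell,c)$, and that $A(\ell,c)=\ell\oplus\ell f$ subject to the relations $zf=f\sigma_{c_1}(z)$ and $(fw)(fv)=(c_2w+c_3\overline{w})v$ for all $z,w,v\in\ell$, where $\sigma_{c_1}(z):=(1-c_1)z+c_1\overline{z}$. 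Associativity of $A(\ell,c)$ forces $\sigma_{c_1}$ to be multiplicative, hence $c_1\in\{0,1\}$, and inspecting the second relation then shows $A(\ell,c)$ is associative exactly when $c=(0,c_2,0)$ or $c=(1,0,c_3)$.

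For (i), I would take an isomorphism $\varphi\colon A(\ell,d)\to A(\ell,c)$ in $\scr{C}(k)$ and first reduce to the case where $\varphi$ carries the distinguished copy of $\ell$ onto the distinguished copy of $\ell$: in $\scr{N}_\ell(k)$ this copy is the right nucleus and so is intrinsic, while in $\scr{K}_\ell(k)$ and $\scr{S}_\ell(k)$ one appeals to the $kV$-module description of $\scr{C}_\ell(k)$ recalled in the text. Then, since $[\ell:k]=2$, the automorphism of $\ell$ induced by $\varphi$ is either $\id_\ell$ or the Galois conjugation; assume the former, the other case differing only by conjugation bars in the formulas. Write $\varphi(f)=p+f'q$ with $p,q\in\ell$, where $f'$ is the distinguished generator of the target; surjectivity gives $q\neq 0$. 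Applying $\varphi$ to $zf=f\sigma_{d_1}(z)$ and comparing the $f'$-components yields $c_1=d_1$. Applying $\varphi$ to $(fw)(fv)=(d_2w+d_3\overline{w})v$ and comparing the $f'$-components forces $p=0$ --- this is the one place $\Char k\neq 2$ is used, since $\sigma_{d_1}(z)+z=0$ cannot hold for all $z\in\ell$ --- and then comparing the $\ell$-components (take $v=1$ and two values of $w$) gives $d_2=q^2c_2$ and $d_3=n_{\ell/k}(q)c_3$; putting $x=q^{-1}$ produces exactly the relation $(c_1,c_2,c_3)=(d_1,x^2d_2,n_{\ell/k}(x)d_3)$. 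For the converse, one verifies directly from (\ref{eqnMultA}) that whenever this relation holds the map $(z,w)\mapsto(z,x^{-1}w)$ is an isomorphism $A(\ell,d)\to A(\ell,c)$.

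For (ii) and (iii), I would first locate $\scr{K}_\ell(k)$ and $\scr{S}_\ell(k)$ inside $\{A(\ell,c)\mid c\in C_\ell\}$, which is legitimate since every object of $\scr{C}_\ell(k)$ is of this form by \cite[Theorem 4.5]{dieterich2017onFour}. Using the normal form above, $A(\ell,c)$ is commutative iff $c_1=c_3=0$, and then $A(\ell,(0,c_2,0))\cong\ell[t]/(t^2-c_2)$, which is a field precisely when $c_2\notin\ell_{\mathrm{sq}}$; as admissibility forces $c_2\in k^*$, this reads $c_2\in k^*\setminus(k^*\cap\ell_{\mathrm{sq}})$. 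Likewise $A(\ell,c)$ is associative but non-commutative iff $c=(1,0,c_3)$, and then $A(\ell,(1,0,c_3))$ is a quaternion algebra over $k$ (central simple of dimension $4$), which is a division algebra precisely when $c_3\notin n_{\ell/k}(\ell^*)$, i.e.\ $c_3\in k^*\setminus n_{\ell/k}(\ell^*)$. Hence $\scr{K}_\ell(k)$ consists of the $A(\ell,(0,c_2,0))$ with $c_2\in k^*\setminus(k^*\cap\ell_{\mathrm{sq}})$ and $\scr{S}_\ell(k)$ of the $A(\ell,(1,0,c_3))$ with $c_3\in k^*\setminus n_{\ell/k}(\ell^*)$. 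Substituting these into (i) gives $A(\ell,(0,c_2,0))\cong A(\ell,(0,c_2',0))$ iff $c_2/c_2'\in k^*\cap\ell_{\mathrm{sq}}$, and $A(\ell,(1,0,c_3))\cong A(\ell,(1,0,c_3'))$ iff $c_3/c_3'\in n_{\ell/k}(\ell^*)$. Therefore the isoclasses of $\scr{K}_\ell(k)$ are in bijection with the non-identity classes of $k^*/(k^*\cap\ell_{\mathrm{sq}})$ and those of $\scr{S}_\ell(k)$ with the non-identity classes of $k^*/n_{\ell/k}(\ell^*)$, so transversals $S(\ell)$ and $T(\ell)$ of the corresponding punctured quotient groups produce the asserted classifying lists.

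The step I expect to be the real obstacle is the opening reduction in (i): that an isomorphism in $\scr{C}(k)$ may be taken to respect the distinguished copy of $\ell$. For the non-associative objects this is automatic from the right nucleus, but for the fields and the central skew fields the distinguished $\ell$ is not intrinsic, and justifying the reduction draws on the finer structure theory of \cite{dieterich2017onFour} (the $kV$-module decomposition and the normalisations it permits). In those two cases one can instead argue directly --- a $k$-algebra isomorphism permutes the three quadratic subfields of a biquadratic field, and on the quaternion side it is pinned down by the norm form --- but either way this is the point that carries the weight of the proof, the remaining coefficient bookkeeping being routine.
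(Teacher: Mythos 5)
This proposition is not proved in the paper at all: it is imported verbatim from \cite{dieterich2017onFour} (Lemma 3.4 and Corollary 5.4 there), so there is no in-paper argument to compare yours against. Judged on its own, your reconstruction is sound and is essentially the standard argument: the crossed-product normal form $A(\ell,c)=\ell\oplus\ell f$ with $zf=f\sigma_{c_1}(z)$ and $(fw)(fv)=(c_2w+c_3\overline{w})v$ is correct for the multiplication (\ref{eqnMultA}), the coefficient comparison in (i) does yield $c_1=d_1$, $c_2=x^2d_2$, $c_3=n_{\ell/k}(x)d_3$ with $x=q^{-1}$, and the identification of the associative objects ($c_1=c_3=0$ commutative, $(c_1,c_2)=(1,0)$ central) reduces (ii) and (iii) to (i) exactly as you say. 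You also correctly isolate the only non-routine step, namely arranging that an isomorphism carries the distinguished copy of $\ell$ onto the distinguished copy: your three fallbacks all work --- in $\scr{N}_\ell(k)$ the copy of $\ell$ \emph{is} the right nucleus (note $N_r$ is an associative division subalgebra containing $\ell$, hence of even $k$-dimension, so it cannot be three-dimensional); for biquadratic fields the other two quadratic subfields of $A(\ell,(0,c_2,0))$ are not isomorphic to $\ell$ by admissibility of $c$; and for quaternion algebras Skolem--Noether lets you conjugate the image of $\ell$ back onto $\ell$. One small inaccuracy of emphasis: in the case $\sigma_{d_1}=\overline{(\cdot)}$ the vanishing of $p$ already follows from the $\ell$-component of $\varphi(zf)=\varphi(f)\sigma_{d_1}(z)$ without any characteristic hypothesis; $\Char k\neq 2$ is only needed when $d_1=0$.
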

	
	Part (i) of Proposition \ref{bigProp} is an isomorphism condition of algebras constructed from admissible triples. Parts (ii), (iii) of the proposition provide reductions of the \emph{local} classification problems in $\scr{C}(k)$ concerning fields and skew fields to finding transversals of punctured quotient groups of $k$.
	
	In \cite[Section 5]{dieterich2017onFour} the category $\scr{C}(k)$ is classified in the case of $k$ being a finite field of odd order or $k$ an ordered field with $k_\mathrm{sq}=k_{\geq 0}$. In this paper the case $k=\bb{Q}$ is discussed.
	
	\section{On the classification of $\scr{C}(\bb{Q})$}
	
	In this section we investigate the category $\scr{C}(\bb{Q})$ by studying the local parts of the covering (\ref{eqnCovering}). In Section \ref{secK} the object of study is $\scr{K}(\bb{Q})$ and in Proposition \ref{propFields} we classify $\scr{K}(\ell/\bb{Q})$ for each $\ell$ in a classifying list $\scr{L}$ (to be specified below) of $\scr{Q}(\bb{Q})$. The classifications are found by explicitly presenting transversals as in Proposition \ref{bigProp} (ii). 
	
	In Section \ref{secS} we study $\scr{S}(\bb{Q})$ and we construct an exhaustive list of $(\bb{Q}^*/n_{\ell/\bb{Q}}(\bb{Q}^*))^\circ$ for each $\ell\in\scr{L}$. We provide a method for reducing the exhaustive list to a transversal and we hence approach a classification of $\scr{S}_\ell(\bb{Q})$ in view of Proposition \ref{bigProp} (iii). In Proposition \ref{propSkewFields2} we classify $\scr{S}_\ell(\bb{Q})$ in the special case $\ell=\bb{Q}(i)$. The results in this section rely on classic number theoretic theorems dating back to Fermat and Legendre.
	
	In Section \ref{secN} we study $\scr{N}(\bb{Q})$ and present for each $\ell\in\scr{L}$ an irredundant two-parameter family $\tilde{P}(\ell)\subset C_\ell$ of admissible triples such that $A(\ell,c)\in\scr{N}_\ell(k)$ for each $c\in \tilde{P}(\ell)$. In the special case $\ell=\bb{Q}(i)$ we enlarge the two-parameter family to an irredundant four-parameter family. 
	
	\textbf{Notation}: Besides the notation introduced just before Proposition \ref{bigProp} we also set $\bb{N}=\bb{Z}_{\geq0}$ and define $N_2=\bb{N}_{\mathrm{sq}}+\bb{N}_{\mathrm{sq}}$, $Q_2=\bb{Q}_\mathrm{sq}+\bb{Q}_\mathrm{sq}$, i.e. the natural and rational numbers expressible as a sum of two squares, respectively. Let $\bb{P}\subset\bb{N}$ denote the set of prime numbers and let $\bb{P}_1,\bb{P}_3$ denote the subsets of prime numbers $\equiv 1,3\pmod{4}$ respectively. Let $m_p(n)=\max\{k\in\bb{N}: p^k\mid n\}$ for $p\in\bb{P}$, $n\in\bb{Z}\smpt{0}$. In addition, we adopt the convention that $m_p(0)=\infty$ for every $p\in \bb{P}$.
	
	Let $Z=\{n\in\bb{Z}\mid m_p(n)\leq 1\text{ for all } p\in\bb{P}\}\smpt{1}$ so that $Z$ is the set of all square-free integers except $1$. Then, $Z$ is a transversal of $(\bb{Q}^*/\bb{Q}_{\mathrm{sq}}^*)^\circ$ and the list $\scr{L}:=\{\bb{Q}(\sqrt{z})\mid z\in Z\}$ classifies $\scr{Q}(\bb{Q})$. 
	
	\subsection{On $\scr{K}(\bb{Q})$}
	\label{secK}
	
	For each $\ell\in\scr{L}$, transversals $S(\ell)$ as in Proposition \ref{bigProp} (ii) are given as follows.
	
	\begin{prop}
	\label{propFields} Take $z\in Z$ and set $\ell=\bb{Q}(\sqrt{z}) \in\scr{L}$.
	\begin{itemize}
	\item[\emph{(i)}] If $z=-1$, then $S(\ell)=Z_{<0}\smpt{-1}$ is a transversal of $(\bb{Q}^*/(\bb{Q}^*\cap \ell_{\mathrm{sq}}))^\circ$.
	
	\item[\emph{(ii)}] If $z\neq -1$, then $S(\ell)=\left\{w\in Z: |\gcd(z,w)|<\sqrt{|z|}\right\}$ is a transversal of 
	
	$(\bb{Q}^*/(\bb{Q}^*\cap \ell_{\mathrm{sq}}))^\circ$.
	\end{itemize}
	\end{prop}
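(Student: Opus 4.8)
The plan is to first identify the subgroup $H:=\bb{Q}^*\cap\ell_{\mathrm{sq}}$ explicitly and then convert the problem into a combinatorial selection among the square-free integers. Writing an element of $\ell=\bb{Q}(\sqrt z)$ as $a+b\sqrt z$ with $a,b\in\bb{Q}$, one has $(a+b\sqrt z)^2=a^2+b^2z+2ab\sqrt z$, which lies in $\bb{Q}$ exactly when $ab=0$; hence $H=\bb{Q}_{\mathrm{sq}}^*\cup z\bb{Q}_{\mathrm{sq}}^*$. Since $z\in Z$ is square-free and $\neq 1$, it is not a square in $\bb{Q}$, so $[H:\bb{Q}_{\mathrm{sq}}^*]=2$ and $\bb{Q}^*/H$ is the quotient of $\bb{Q}^*/\bb{Q}_{\mathrm{sq}}^*$ by the order-two subgroup $\overline{N}$ generated by the class of $z$. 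Consequently a transversal of $(\bb{Q}^*/H)^\circ$ is obtained from a transversal of $\bb{Q}^*/\bb{Q}_{\mathrm{sq}}^*$ by choosing one element from each $\overline{N}$-coset and discarding the coset of the identity.

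Since $Z\cup\{1\}$ is a transversal of $\bb{Q}^*/\bb{Q}_{\mathrm{sq}}^*$ (with $1$ representing the identity), I would next describe the $\overline{N}$-cosets inside $Z\cup\{1\}$. For $w\in Z\cup\{1\}$ the other element of its coset is $w'$, the square-free part of $wz$; the map $w\mapsto w'$ is a fixed-point-free involution of $Z\cup\{1\}$ (a fixed point would force $z$ to be a square), and the coset of the identity is exactly $\{1,z\}$. Setting $d:=\gcd(|z|,|w|)$ and writing $|z|=da$, $|w|=db$ with $a,b,d$ pairwise coprime and square-free, one gets $|w'|=ab=|wz|/d^2$ and $\gcd(|z|,|w'|)=a$. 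Thus within a non-identity coset $\{w,w'\}$ the condition $|\gcd(z,\cdot)|<\sqrt{|z|}$ amounts to $d<a$ for $w$ and to $a<d$ for $w'$; these are mutually exclusive, and they are jointly exhaustive because $a=d$ would force $a=d=1$ (as $\gcd(a,d)=1$) and hence $|z|=1$, impossible for $z\in Z$ with $z\neq-1$. So, provided $z\neq-1$, exactly one of $w,w'$ satisfies the condition.

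For part (ii) it then remains to note that the coset $\{1,z\}$ contributes nothing to $S(\ell)$: $1\notin Z$, and $\gcd(z,z)=|z|\geq 2>\sqrt{|z|}$, so $z$ fails the condition. Hence $S(\ell)=\{w\in Z:|\gcd(z,w)|<\sqrt{|z|}\}$ selects exactly one representative of each non-identity $\overline{N}$-coset and is therefore a transversal of $(\bb{Q}^*/(\bb{Q}^*\cap\ell_{\mathrm{sq}}))^\circ$. For part (i), where $z=-1$, the inequality above degenerates since $|w'|=|w|$ always, so a different tie-breaker is needed: here $w'=-w$, the cosets are the pairs $\{w,-w\}$, the identity coset is $\{1,-1\}$, and each non-identity coset contains exactly one negative element; selecting the negative representative yields $S(\ell)=Z_{<0}\setminus\{-1\}$.

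The main obstacle is the elementary but careful bookkeeping in the second paragraph: determining the square-free part of $wz$, computing $\gcd(|z|,|w'|)=a$, and verifying that the dichotomy $d<a$ versus $a<d$ is exhaustive precisely because $|z|\neq 1$ — which is also the structural reason the case $z=-1$ must be handled separately.
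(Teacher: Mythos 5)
Your proof is correct, and it takes a genuinely different route from the paper's. You factor the problem structurally: identify $H=\bb{Q}^*\cap\ell_{\mathrm{sq}}=\bb{Q}^*_{\mathrm{sq}}\cup z\bb{Q}^*_{\mathrm{sq}}$, view $\bb{Q}^*/H$ as the quotient of $\bb{Q}^*/\bb{Q}^*_{\mathrm{sq}}$ by the order-two subgroup generated by $[z]$, and then reduce everything to a single combinatorial statement about the fixed-point-free involution $w\mapsto w'$ (square-free part of $wz$) on the transversal $Z\cup\{1\}$; the identity $\gcd(|z|,|w'|)=a$ with $|z|=da$ then shows that exactly one element of each non-identity pair $\{w,w'\}$ satisfies $|\gcd(z,w)|<\sqrt{|z|}$, which delivers exhaustion and irredundance simultaneously. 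The paper instead argues directly and separately: exhaustion by exhibiting, for a given class, either $w$ or $\tfrac{z}{d}w'$ as a representative in $S(\ell)$, and irredundance by a two-case analysis ($w_1/w_2\in\bb{Q}^*_{\mathrm{sq}}$ versus $w_1/w_2\in z\bb{Q}^*_{\mathrm{sq}}$) carried out with coprime integers $r,s$ and prime-by-prime divisibility, ending in the same numerical contradiction $|z|=d_1d_2$ with both $d_i<\sqrt{|z|}$ that underlies your dichotomy $d<a$ versus $a<d$. Your version is shorter and more transparent because it openly leverages the fact (asserted earlier in the paper) that $Z\cup\{1\}$ is a transversal of $\bb{Q}^*/\bb{Q}^*_{\mathrm{sq}}$, and it makes the role of the hypothesis $z\neq -1$ conceptually clear (the inequality tie-breaker degenerates exactly when $|z|=1$); the paper's version is more self-contained, essentially re-deriving the needed portion of that transversal property inside the irredundance argument. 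All the supporting computations in your write-up (pairwise coprimality of $a,b,d$, the value of $|w'|$, the exclusion of the identity coset $\{1,z\}$, and the sign-based selection when $z=-1$) check out.
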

	
	\begin{proof}
	Let $\pi\colon\bb{Q}^*\longrightarrow (\bb{Q}^*/(\bb{Q}^*\cap \ell_{\mathrm{sq}}))^\circ$ denote the quotient map. We have $\bb{Q}^*\cap \ell_{\mathrm{sq}}=\bb{Q}^*_{\mathrm{sq}}\cup z\bb{Q}^*_{\mathrm{sq}}$. 
	
	(i): Take $q\in \bb{Q}^*$ with $\pi(q)\neq \pi(z)$ and $w\in Z\smpt{z}$ such that $\pi(q)=\pi(w)$. We have $\pi(q)=\pi(\pm w)$ where either $w\in S(\ell)$ or $-w\in S(\ell)$, so $S(\ell)$ exhausts $(\bb{Q}^*/(\bb{Q}^*\cap \ell_{\mathrm{sq}}))^\circ$. Suppose that $w,w'\in S(\ell)$ and $\pi(w)=\pi(w')$. Then, as $w,w'$ have the same sign we have $\tfrac{w}{w'}\in \bb{Q}^*_{\mathrm{sq}}$ so there are integers $r,s$ with $\gcd(r,s)=1$ such that $s^2w=r^2w'$. Take $p\in\bb{P}$ such that $p\mid w$. Then $p\mid r^2w'$ and if $p\mid r$ we get $p^2\mid s^2w$ and since $w$ is square-free we get $p\mid s$, contradiction to $\gcd(r,s)=1$. Thus $p\mid w'$ and by symmetry we get $p\mid w$ if $p\mid w'$, hence $w=w'$. Thus $S(\ell)$ is a transversal of $(\bb{Q}^*/(\bb{Q}^*\cap \ell_{\mathrm{sq}}))^\circ$.
	
	(ii): Suppose now $z\neq -1$ and take $q\in \bb{Q}^*$ with $\pi(q)\neq \pi(z)$. As above, find $w\in Z\smpt{z}$ with $\pi(q)=\pi(w)$ and write $w=dw'$ with $d=\gcd(z,w)>0$. If $d<\sqrt{|z|}$ then $w\in S(\ell)$. Otherwise, $\left|\tfrac{z}{d}\right|<\sqrt{|z|}$. Indeed, if $d\geq \sqrt{|z|}$ and $\left|\tfrac{z}{d}\right|\geq \sqrt{|z|}$ then we must have $d=\left|\tfrac{z}{d}\right|=\sqrt{|z|}$ so that $\sqrt{|z|}$ is an integer, contradiction to $z$ being a square-free integer different from $\pm 1$. Then, $\tfrac{z}{d}w'\in S(\ell)$ and $\pi\left(\tfrac{z}{d}w'\right)=d^2\tfrac{z}{d}w'(\bb{Q}^*_\mathrm{sq}\cup z\bb{Q}^*_\mathrm{sq})=w(\bb{Q}^*_\mathrm{sq}\cup z\bb{Q}^*_\mathrm{sq})=\pi(q)$. Hence, $S(\ell)$ exhausts $(\bb{Q}^*/(\bb{Q}^*\cap \ell_{\mathrm{sq}}))^\circ$.
		
	To prove irredundance of $S(\ell)$, take $w_1,w_2\in  S(\ell)$ and suppose that $\pi(w_1)=\pi(w_2)$. Then, either $\tfrac{w_1}{w_2}\in\bb{Q}^*_{\mathrm{sq}}$ or $\tfrac{w_1}{w_2}\in z\bb{Q}^*_{\mathrm{sq}}$. For $i\in\{1,2\}$, write $w_i=d_iw_i'$ with $d_i=\gcd(w_i,z)>0$ and $|d_i|<\sqrt{|z|}$.
		
	Suppose first that $\tfrac{w_1}{w_2}\in\bb{Q}^*_{\mathrm{sq}}$. Then, there are integers $r,s$ with $\gcd(r,s)=1$ such that $w_1=\tfrac{r^2}{s^2}w_2$ which implies $s^2d_1w_1'=r^2d_2w_2'$. Suppose there is $p\in\bb{P}$ with $p\mid d_1$. Since $d_1\mid z$ and $\gcd(z,w_2')=1$ we have $p\mid r^2d_2$. If $p\mid r$ then $p^2\mid r^2$ and so $p^2\mid s^2d_1w_1'=s^2w_1$, contradiction to $w_1$ being square-free. Hence, $p\mid d_2$. By symmetry, $p\mid d_2$ implies $p\mid d_1$ and hence $d_1=d_2$. Thus, $s^2w_1'=r^2w_2'$ and by a similar argument, we get $w_1'=w_2'$ and hence also $w_1=w_2$ as desired.
		
	Suppose now $\tfrac{w_1}{w_2}\in z\bb{Q}^*_{\mathrm{sq}}$ so that we can write 
	$s^2d_1w_1'=r^2zd_2w_2'$ for some relatively prime integers $r,s$. Using $\gcd(z,w_i')=1$ and arguments as above, we get $w_1'=\pm w_2'$ and hence $s^2d_1=\pm r^2zd_2$. Thus, $\tfrac{r^2}{s^2}=\pm\tfrac{z}{d_1}\cdot d_2\in\bb{N}$. Hence, $\pm\tfrac{z}{d_1}\cdot d_2$ is the square of an integer, and as $\tfrac{z}{d_1}$, $d_2$ are both square-free we must have $\pm\tfrac{z}{d_1}=d_2$ so that $|z|=d_1d_2$, which is a contradiction since $d_1,d_2<\sqrt{|z|}$ by assumption.
	\end{proof}
	
	\begin{rem}
	By Proposition \ref{bigProp} (ii) we arrive, for each $\ell\in\scr{L}$, at a classification of the category $\scr{K}_\ell(\bb{Q})$, which consists of all $4$-dimensional Galois extensions of $\bb{Q}$ with Galois group $V$ that contain $\ell$ as a subfield.
	\end{rem}
	
	\subsection{On $\scr{S}(\bb{Q})$}
	\label{secS}
	
	Given a field $\ell=\bb{Q}(\sqrt{z})\in\scr{L}$ we attempt to find a transversal $T(\ell)$ as in Proposition \ref{bigProp} (iii). The set $Z\setminus n_{\ell/\bb{Q}}(\ell^*)$ exhausts $(\bb{Q}^*/n_{\ell/\bb{Q}}(\ell^*))^\circ$
	and in order to present this set explicitly we have the following result.
	
	\begin{prop}
	\label{propSkewFields1}
	Take $z\in Z$ and set $\ell=\bb{Q}(\sqrt{z})$. Take $w\in Z$ and set $0<d=\gcd(z,w)$. Then, $w\in n_{\ell/\bb{Q}}(\ell^*)$ if and only if all of the following four conditions are satisfied: $z,w$ are not both negative, $z$ is a square mod $\tfrac{w}{d}$, $w$ is a square mod $\tfrac{z}{d}$ and $-\tfrac{zw}{d^2}$ is a square mod $d$.
	\end{prop}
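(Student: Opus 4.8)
The plan is to recast the norm condition as the nontrivial solvability of a diagonal ternary quadratic Diophantine equation and then invoke Legendre's classical theorem on such equations. Writing a general element of $\ell^*$ as $a+b\sqrt{z}$ with $(a,b)\in\bb{Q}^2\smpt{(0,0)}$, we have $n_{\ell/\bb{Q}}(a+b\sqrt{z})=a^2-zb^2$, so $w\in n_{\ell/\bb{Q}}(\ell^*)$ if and only if $a^2-zb^2=w$ has a rational solution, which after clearing denominators is equivalent to the equation
\[
X^2-zY^2-wW^2=0
\]
having a nontrivial integer solution. Because $z$ is a square-free integer different from $1$ and $w\neq 0$, any nontrivial solution automatically has $W\neq 0$, so the two formulations are genuinely equivalent.

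Next I would bring this equation into the shape required by Legendre's theorem, namely with pairwise coprime square-free coefficients. Since $z,w$ are square-free and $d=\gcd(z,w)>0$, the integers $z'=z/d$, $w'=w/d$, $d$ are square-free and pairwise coprime, with $z=dz'$ and $w=dw'$. In $X^2=d(z'Y^2+w'W^2)$ the square-free integer $d$ divides $X^2$, hence $d\mid X$; substituting $X=dX_1$ and dividing through by $d$ yields $z'Y^2+w'W^2-dX_1^2=0$. A routine check shows that nontrivial integer solutions of the two equations correspond to one another, so $w\in n_{\ell/\bb{Q}}(\ell^*)$ if and only if the form with coefficients $(a,b,c)=(z',w',-d)$ represents zero nontrivially.

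Finally I would apply Legendre's theorem to $(a,b,c)=(z',w',-d)$: a nontrivial solution exists if and only if $a,b,c$ are not all of the same sign, and $-bc$, $-ca$, $-ab$ are quadratic residues modulo $|a|$, $|b|$, $|c|$ respectively. Since $d>0$ forces $c<0$, ``not all of the same sign'' amounts to ``not both $z'<0$ and $w'<0$'', i.e.\ $z$ and $w$ are not both negative. The three residue conditions unwind as: $-bc=w'd=w$ is a square modulo $\left|\tfrac{z}{d}\right|$; $-ca=dz'=z$ is a square modulo $\left|\tfrac{w}{d}\right|$; and $-ab=-z'w'=-\tfrac{zw}{d^2}$ is a square modulo $d$. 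These are exactly the four conditions in the statement, where a congruence modulo $1$ is read as vacuously satisfied in case one of $z/d$, $w/d$, $d$ equals $\pm 1$.

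The argument has no deep obstacle, being essentially a specialization of Legendre's theorem; the points demanding care are the reduction to pairwise coprime coefficients — verifying that $z',w',d$ are indeed pairwise coprime (using square-freeness and the definition of $d$) and that the substitution $X=dX_1$ induces a bijection on nontrivial solutions — together with the precise matching of Legendre's sign and residue conditions with the four conditions as stated.
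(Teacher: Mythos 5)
Your proposal is correct and follows essentially the same route as the paper: reformulate $w\in n_{\ell/\bb{Q}}(\ell^*)$ as nontrivial solvability of the ternary form $X^2-zY^2-wW^2=0$, divide out $d=\gcd(z,w)$ to obtain pairwise coprime square-free coefficients, and apply Legendre's theorem. The only cosmetic difference is that you spell out the matching of Legendre's sign and residue conditions, which the paper leaves to the citation.
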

	
	\begin{proof}
	We have that $w\in n_{\ell/\bb{Q}}(\ell^*)$ if and only if there exists $(a,b)\in \bb{Q}^2\smpt{(0,0)}$ such that $w=a^2-zb^2$. We claim that the following are equivalent:
	\begin{enumerate}[(i)]
	\item There exists $(a,b)\in \bb{Q}^2\smpt{(0,0)}$ such that $w=a^2-zb^2$.
	
	\item There exists $(a,b,c)\in\bb{Z}^3\smpt{(0,0,0)}$ such that $a^2-wb^2-zc^2=0$.
	
	\item There exists $(a,b,c)\in\bb{Z}^3\smpt{(0,0,0)}$ such that $da^2-\tfrac{w}{d}b^2-\tfrac{z}{d}c^2=0$.
	\end{enumerate}
	
	Indeed, suppose $(a,b)$ satisfies (i). If $a=0$ then $\frac{z}{w}\in \bb{Q}_{\mathrm{sq}}$ then it is immediate that (ii) holds. Otherwise, write $b=\tfrac{b_1}{b_2}$, $a=\tfrac{a_1}{a_2}$, $a_1\neq 0$. Then $a_1^2-zb_1^2-w(a_2b_2)^2=0$, so (ii) holds. Conversely, if $(a,b,c)$ satisfies (ii), then $b\neq 0$, for otherwise $c\neq 0$ and $z=\left(\tfrac{a}{c}\right)^2$, contradicting $z$ square-free. With $b\neq 0$ we see that $w=\left(\tfrac{a}{b}\right)^2-z\left(\tfrac{c}{b}\right)^2$, so (i) holds.
	
	Suppose $(a,b,c)$ satisfies (ii). Since $d\mid z,w$ we get $d\mid a^2$ which implies $d\mid a$ as $d$ is square-free. Write $a=da'$ so that $d(a')^2-\tfrac{w}{d}b^2-\tfrac{z}{d}c^2=0$ shows that (iii) holds. Finally, if (iii) holds with $(a,b,c)$, multiply $da^2-\tfrac{w}{d}b^2-\tfrac{z}{d}c^2=0$ with $d$ to see that (ii) holds. 
	
	Now, by Legendre's theorem on ternary integral quadratic forms \cite[Proposition 5.11]{niven1991number} item (iii) above is equivalent to the conditions in the statement of the proposition.
	\end{proof}
	
	\begin{rem}
	Proposition \ref{propSkewFields1} allows us to present \emph{explicitly} the set $Z\setminus n_{\ell/\bb{Q}}(\ell^*)$ which exhausts $(\bb{Q}^*/n_{\ell/\bb{Q}}(\ell^*))^\circ$. However, in order to get a transversal as in Proposition \ref{bigProp} (iii) we need an exhausting set which also is irredundant. 
	
	Take $w,w'\in Z\setminus n_{\ell/\bb{Q}}(\ell^*)$. Then, $w,w'$ represent the same coset in $(\bb{Q}^*/n_{\ell/\bb{Q}}(\ell^*))^\circ$ and only if $\tfrac{w}{w'}\in n_{\ell/\bb{Q}}(\ell^*)$ which holds if and only if $\tfrac{(w')^2}{d^2}\cdot\tfrac{w}{w'}=\tfrac{w'w}{d^2}\in n_{\ell/\bb{Q}}(\ell^*)$. Now, $\tfrac{w'w}{d^2}\in Z$ and we can use Proposition \ref{propSkewFields1} to answer the question whether $\tfrac{w'w}{d^2}\in n_{\ell/\bb{Q}}(\ell^*)$. Thus, we can use Proposition \ref{propSkewFields1} in order to successively discard elements of $Z\setminus n_{\ell/\bb{Q}}(\ell^*)$ that prevent irredundance, hence we have an "infinite method" of reducing $Z\setminus n_{\ell/\bb{Q}}(\ell^*)$ to a transversal of $(\bb{Q}^*/n_{\ell/\bb{Q}}(\ell^*))^\circ$. However, more work is required in order to present these transversals explicitly.
	\end{rem}
	
	We now consider a special case, in which we can explicitly present a transversal of $(\bb{Q}^*/n_{\ell/\bb{Q}}(\ell^*))^\circ$.
	
	\begin{prop}
	\label{propSkewFields2}
	Let $\ell=\bb{Q}(i)$ and define $T(\ell)=\{z\in Z\mid m_p(z)>0\emph{ implies }p\in\bb{P}_3\}$. Then $T(\ell)$ is a transversal of $(\bb{Q}^*/n_{\ell/\bb{Q}}(\ell^*))^\circ$.
	\end{prop}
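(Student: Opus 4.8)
The plan is first to pin down the norm group $n_{\ell/\bb{Q}}(\ell^*)$ explicitly, and then to check directly that $T(\ell)$ is both irredundant and exhaustive modulo it. For $\ell=\bb{Q}(i)$ we have $z=-1$, so in Proposition~\ref{propSkewFields1} one always has $d=\gcd(-1,w)=1$ and the four conditions collapse to the single requirement that $w>0$ and $-1$ is a square modulo $w$. For square-free positive $w$ the Chinese remainder theorem, together with the standard fact that $-1$ is a square modulo an odd prime $p$ exactly when $p\equiv 1\pmod 4$ (and trivially modulo $2$), shows that $-1$ is a square modulo $w$ precisely when $w$ has no prime factor in $\bb{P}_3$. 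Hence $Z\cap n_{\ell/\bb{Q}}(\ell^*)=\{w\in Z: w>0\text{ and }m_p(w)=0\text{ for all }p\in\bb{P}_3\}$, equivalently $Z\setminus n_{\ell/\bb{Q}}(\ell^*)=\{w\in Z: w<0\}\cup\{w\in Z: w>0,\ p\mid w\text{ for some }p\in\bb{P}_3\}$. I also record two elementary facts to be used throughout: every element of $n_{\ell/\bb{Q}}(\ell^*)$ is positive, and, since $\bb{Q}^*_{\mathrm{sq}}\subseteq n_{\ell/\bb{Q}}(\ell^*)$ and the latter is a subgroup of $\bb{Q}^*$, membership of an arbitrary $q\in\bb{Q}^*$ in $n_{\ell/\bb{Q}}(\ell^*)$ depends only on the square-free part $w_0\in Z\cup\{1\}$ of $q$ (the unique square-free integer with $q/w_0\in\bb{Q}^*_{\mathrm{sq}}$).

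Given this description, the inclusion $T(\ell)\subseteq Z\setminus n_{\ell/\bb{Q}}(\ell^*)$ is immediate: a $z\in T(\ell)$ is either negative, or positive with $z\neq 1$, hence $\geq 2$ with every prime factor in $\bb{P}_3$; so no member of $T(\ell)$ represents the trivial coset. For irredundance, suppose $w_1,w_2\in T(\ell)$ satisfy $w_1/w_2\in n_{\ell/\bb{Q}}(\ell^*)$ and set $d=\gcd(w_1,w_2)>0$. As in the remark following Proposition~\ref{propSkewFields1}, this is equivalent to $\tfrac{w_1w_2}{d^2}\in n_{\ell/\bb{Q}}(\ell^*)$. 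Now $\tfrac{w_1w_2}{d^2}$ is a square-free integer; it is positive, because $w_1/w_2>0$ forces $w_1,w_2$ to have the same sign; and all of its prime factors lie in $\bb{P}_3$, since those of $w_1$ and $w_2$ do. If $w_1\neq w_2$, then $\tfrac{w_1w_2}{d^2}>1$, so it lies in $Z$ and, having a prime factor in $\bb{P}_3$, it is \emph{not} in $n_{\ell/\bb{Q}}(\ell^*)$ by the first paragraph --- a contradiction. Hence $w_1=w_2$.

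For exhaustion, take $q\in\bb{Q}^*\setminus n_{\ell/\bb{Q}}(\ell^*)$ and let $w_0$ be its square-free part; then $w_0\notin n_{\ell/\bb{Q}}(\ell^*)$, so $w_0\neq 1$ and $w_0\in Z\setminus n_{\ell/\bb{Q}}(\ell^*)$. Write $w_0=\varepsilon\,m_1m_3$ with $\varepsilon\in\{\pm 1\}$, $m_1$ the part of $|w_0|$ supported on $\{2\}\cup\bb{P}_1$ and $m_3$ the part supported on $\bb{P}_3$ (both square-free), and put $z=\varepsilon\,m_3$. Then $z$ is square-free with every prime factor in $\bb{P}_3$, and $z\neq 1$: otherwise $\varepsilon=1$ and $m_3=1$, which would force $w_0>0$ with no prime factor in $\bb{P}_3$, i.e.\ $w_0\in n_{\ell/\bb{Q}}(\ell^*)$, a contradiction. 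Thus $z\in T(\ell)$. Finally $w_0/z=m_1$ is a positive square-free integer (possibly $1$) with no prime factor in $\bb{P}_3$, hence $m_1\in n_{\ell/\bb{Q}}(\ell^*)$, and since also $q/w_0\in\bb{Q}^*_{\mathrm{sq}}\subseteq n_{\ell/\bb{Q}}(\ell^*)$ it follows that $q$ and $z$ represent the same coset. So $T(\ell)$ exhausts $(\bb{Q}^*/n_{\ell/\bb{Q}}(\ell^*))^\circ$, which together with the previous paragraph shows $T(\ell)$ is a transversal.

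The only genuinely non-formal input is the explicit description of $n_{\ell/\bb{Q}}(\ell^*)$ in the first paragraph, which is Fermat's two-square theorem (here extracted from Legendre's theorem via Proposition~\ref{propSkewFields1}); I expect this to be the main obstacle, since everything afterwards is bookkeeping with prime valuations and signs.
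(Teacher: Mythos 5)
Your proof is correct, and it arrives at the same explicit description of the norm group --- a square-free integer lies in $n_{\ell/\bb{Q}}(\ell^*)$ if and only if it is positive and has no prime factor in $\bb{P}_3$ --- but by a genuinely different route. The paper identifies $n_{\ell/\bb{Q}}(\ell^*)=Q_2^*$ and invokes Fermat's two-square theorem together with the criterion that $\tfrac{a}{b}\in Q_2$ if and only if $ab\in N_2$, whereas you specialize Proposition \ref{propSkewFields1} to $z=-1$ (where $d=1$ and three of the four Legendre conditions trivialize) and combine it with the first supplement to quadratic reciprocity and the Chinese remainder theorem. Your version has the merit of reusing machinery already established in the paper rather than importing an additional classical result, at the cost of a slightly longer derivation; the two number-theoretic inputs are of course equivalent. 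The subsequent bookkeeping runs parallel in both arguments: exhaustion by stripping the primes in $\{2\}\cup\bb{P}_1$ from the square-free part of $q$ (the paper does the same, only far more tersely, and leaves the prime $2$ and the sign implicit where you treat them explicitly), and irredundance by reducing $w_1/w_2\in n_{\ell/\bb{Q}}(\ell^*)$ to the assertion that the square-free positive integer $w_1w_2/d^2$ is a norm, where the paper instead argues directly with $zz'\in N_2$ and evenness of $m_p(zz')$ for $p\in\bb{P}_3$. Both arguments are complete; yours is somewhat more self-contained and more careful about the degenerate cases ($m_1=1$, the sign of $w_0$, and the possibility $z=-1\in T(\ell)$).
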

	
	\begin{proof}
	In the present case, $n_{\ell/\bb{Q}}(\ell^*)=Q_2^*$. Let $\pi:\bb{Q}^*\longrightarrow\bb{Q}^*/Q_2^*$ denote the quotient map. The following characterization of elements of $Q_2$ is stated as an exercise in \cite[p. 352]{sierp1964numbers}: $\tfrac{a}{b}\in Q_2$ if and only if $ab\in N_2$. This characterization is a straightforward consequence of Fermat's characterization of elements of $N_2$: Given $n\in\bb{Z}_{>0}$ we have $n\in N_2$ if and only if $m_p(n)$ is even for all $p\in\bb{P}_3$ (cf. \cite[Theorem 2.15]{niven1991number}).
	
	Take $q\in\bb{Q}^*$ such that $\pi(q)$ is not the identity in $\bb{Q}^*/Q_2^*$. Since $\bb{Q}^*_\mathrm{sq}\subset Q_2^*$ there is $z\in Z$ such that $\pi(q)=\pi(z)$ and since $p\in N_2\subset Q_2$ for each $p\in \bb{P}_1$ can even take $z\in T(\ell)$. Thus $T(\ell)$ exhausts $\bb{Q}^*/Q_2^*$.
	
	Take now $z,z'\in T(\ell)$ and suppose $\pi(z)=\pi(z')$. Then, $\tfrac{z}{z'}\in Q_2^*$, so $zz'\in N_2$. Since $z,z'$ are square-free integers only divisible by primes in $\bb{P}_3$ we must have $z=z'$ and $T(\ell)$ is a transversal of $\bb{Q}^*/Q_2^*$ as desired.
	\end{proof}
	
	\begin{rem} By \cite[Corollary 4.6]{dieterich2017onFour} $\scr{S}_\ell(\bb{Q})$ coincides with the category of all four-dimensional Hurwitz division algebras over $\bb{Q}$ that contain $\ell$ as a subfield and $\bb{Q}V$-submodule. Four dimensional Hurwitz algebras are also known as four-dimensional unital composition algebras or quaternion algebras. Proposition \ref{propSkewFields2} yields a classification of the four-dimensional rational Hurwitz algebras that admit $\bb{Q}(i)$ as a subfield and $\bb{Q}V$-submodule.
	\end{rem}
	
	\subsection{On $\scr{N}(\bb{Q})$}
	\label{secN}
	
	In this section we produce families of algebras in $\scr{N}_\ell(\bb{Q})$ for each $\ell\in\scr{L}$ by producing families of admissible triples to which the construction from Section \ref{secIntro} is applied. Considering the definition of $C_\ell$, equations (\ref{eqnM(x,y)}), (\ref{eqnMultA}), and $\ell=\bb{Q}(\sqrt{z})$, we have for general $c\in\bb{Q}^3$ that $c\in C_\ell$ if and only if the system
	\begin{equation}
	\label{eqnSystem}
	\begin{cases}
	0=x_1^2+z(1-2c_1)x_2^2-(c_2+c_3)y_1^2+z(c_3-c_2)y_2^2\\
	(1-c_1)x_1x_2=c_2y_1y_2
	\end{cases}
	\end{equation}
	admits the trivial solution $(x_1,x_2,y_1,y_2)\in \bb{Q}^4$ only. Since this system is homogeneous, the existence of a non-trivial rational solution is equivalent to the existence of a primitive integral solution.
	
	In the following proposition we produce three-parameter families of triples in $C_\ell$ by choosing $c$ so that the right hand side of the first equation in (\ref{eqnSystem}) is always non-negative.
	
	\begin{prop}
	\label{propNonAss1}
	Take $z\in Z$ and set $\ell=\bb{Q}(\sqrt{z})$.
	\begin{enumerate}
	\item[\emph{(i)}] If $z>0$ then 
	$P(\ell):=\{(c_1,c_2,c_3)\mid c_1<\tfrac{1}{2}\,\wedge\,c_2<0\,\wedge\,c_2<c_3<-c_2\}\subset C_\ell$.
	
	\item[\emph{(ii)}] If $z<0$ then $P(\ell):=\{(c_1,c_2,c_3)\mid c_1>\tfrac{1}{2}\,\wedge\,c_3<0\,\wedge\,c_3<c_2<-c_3\}\subset C_\ell$.
	\end{enumerate}
	\end{prop}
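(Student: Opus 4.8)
The plan is to apply directly the criterion recalled just before the proposition: for $c\in\bb{Q}^3$ one has $c\in C_\ell$ if and only if the homogeneous system (\ref{eqnSystem}) admits only the trivial solution in $\bb{Q}^4$. Since $\bb{Q}^4\subset\bb{R}^4$, it is enough to show that the \emph{first} equation of (\ref{eqnSystem}) alone has only the trivial real solution; the second equation will then play no role at all. Concretely, setting
\[
q(x_1,x_2,y_1,y_2)=x_1^2+z(1-2c_1)x_2^2-(c_2+c_3)y_1^2+z(c_3-c_2)y_2^2 ,
\]
I would show that under the hypotheses defining $P(\ell)$ the diagonal quadratic form $q$ is positive definite, so that $q=0$ already forces $x_1=x_2=y_1=y_2=0$ and hence $c\in C_\ell$.

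With this reduction in hand, the proposition becomes a sign check on the four coefficients $1$, $z(1-2c_1)$, $-(c_2+c_3)$ and $z(c_3-c_2)$. In case (i), $z>0$ and $c_1<\tfrac12$ give $z(1-2c_1)>0$; the bound $c_3<-c_2$ gives $c_2+c_3<0$, hence $-(c_2+c_3)>0$; and $c_2<c_3$ together with $z>0$ gives $z(c_3-c_2)>0$. In case (ii), $z<0$ and $c_1>\tfrac12$ give $1-2c_1<0$, hence $z(1-2c_1)>0$; the bound $c_2<-c_3$ gives $c_2+c_3<0$, hence $-(c_2+c_3)>0$; and $c_3<c_2$ together with $z<0$ gives $z(c_3-c_2)>0$. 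In either case all four coefficients are strictly positive, so $q$ is positive definite and the argument concludes.

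I do not expect any genuine obstacle here: the families $P(\ell)$ are tailored precisely so that $q$ takes only non-negative values and vanishes only at the origin, and the one point needing care is that every inequality defining $P(\ell)$ is \emph{strict}, so that no coefficient of $q$ collapses to $0$ (a vanishing coefficient would leave a variable free and break the argument). One should also note that the second equation of (\ref{eqnSystem}) is simply dropped in this argument; this is the cost of the simplicity of the family and indicates that $P(\ell)$ will fall well short of exhausting $C_\ell$, so that any sharper result would have to allow $q$ to be indefinite and actually use the second equation.
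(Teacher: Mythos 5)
Your proof is correct and is essentially the paper's own argument: the paper likewise checks that all four coefficients of the diagonal form in the first equation of (\ref{eqnSystem}) are positive under the stated inequalities, so that equation alone forces the trivial solution. Your sign verifications in both cases match the paper's, and your remark that the second equation is never used is consistent with the paper's stated strategy.
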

	
	\begin{proof}
	For (i), $c_1<\frac{1}{2}$ ensures that $z(1-2c_1)>0$. From $c_3>c_2$ we get $z(c_3-c_2)>0$ and from $c_3<-c_2$ we get $c_2+c_3<0$ and hence $-(c_2+c_3)>0$ so the right hand side of the first equation of (\ref{eqnSystem}) is non-negative for each $(x_1,x_2,y_1,y_2)\in\bb{Q}^4$ and hence (\ref{eqnSystem}) only admits the trivial solution, implying $(c_1,c_2,c_3)\in C_\ell$. Similar verifications yield (ii).
	\end{proof}
	
	In the following corollary we extract from each three-parameter family $P(\ell)$ of the previous proposition a two-parameter irredundant subfamily $\tilde{P}(\ell)\subset P(\ell)$.
	
	\begin{cor}
	\label{corNonAss1}
	Take $z\in Z$, set $\ell=\bb{Q}(\sqrt{z})$ and let $S(\ell)\subset \bb{Q}^*$ be as in \emph{Proposition \ref{propFields}}, $P(\ell)$ as in \emph{Proposition \ref{propNonAss1}}.
	\begin{enumerate}
		\item[\emph{(i)}] If $z>0$ then $\tilde{P}(\ell):=\{(r,s,\tfrac{1}{2})\mid r<\tfrac{1}{2}\,\wedge\,s\in S(\ell)_{<0}\}\subset C_\ell$ is an irredundant subset of $P(\ell)$.
		
		\item[\emph{(ii)}] If $z<0$ then $\tilde{P}(\ell):=\{(r,s,s-1)\mid r>\tfrac{1}{2}\,\wedge\,s\in S(\ell)_{<0}\}\subset C_\ell$ is an irredundant subset of $P(\ell)$.
	\end{enumerate}
	\end{cor}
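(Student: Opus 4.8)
The plan is to verify two things for each of the cases (i), (ii): that $\tilde{P}(\ell)$ is contained in $C_\ell$, and that it is an irredundant subset of $P(\ell)$ in the sense that distinct parameter pairs yield non-isomorphic algebras. For the containment, note that in case (i) a triple $(r,s,\tfrac12)$ with $r<\tfrac12$ and $s\in S(\ell)_{<0}$ satisfies $c_1=r<\tfrac12$, $c_2=s<0$, and $c_2=s<\tfrac12<-s=-c_2$, i.e. $c_2<c_3<-c_2$; hence $(r,s,\tfrac12)\in P(\ell)\subset C_\ell$ by Proposition \ref{propNonAss1}(i). In case (ii), a triple $(r,s,s-1)$ with $r>\tfrac12$ and $s\in S(\ell)_{<0}$ satisfies $c_1=r>\tfrac12$, $c_3=s-1<-1<0$, and $c_3=s-1<s<-(s-1)=-c_3$ precisely when $s-1<s$ (clear) and $s<1-s$, i.e. $s<\tfrac12$, which holds since $s<0$; hence $(r,s,s-1)\in P(\ell)\subset C_\ell$ by Proposition \ref{propNonAss1}(ii). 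So both families lie in $C_\ell$.

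For irredundance, the tool is Proposition \ref{bigProp}(i): $A(\ell,c)\cong A(\ell,d)$ iff there is $x\in\ell^*$ with $(c_1,c_2,c_3)=(d_1,x^2d_2,n_{\ell/k}(x)d_3)$. In case (i), suppose $A(\ell,(r,s,\tfrac12))\cong A(\ell,(r',s',\tfrac12))$. Then the first coordinate forces $r=r'$, the third coordinate gives $\tfrac12=n_{\ell/k}(x)\cdot\tfrac12$, so $n_{\ell/k}(x)=1$; and the second gives $s=x^2s'$. Now I would invoke the properties of $S(\ell)$ from Proposition \ref{propFields}: $S(\ell)$ is (a subset of) a transversal of $(\bb{Q}^*/(\bb{Q}^*\cap\ell_{\mathrm{sq}}))^\circ$, and $s=x^2s'$ with $x\in\ell^*$ means $s/s'\in\ell_{\mathrm{sq}}\cap\bb{Q}^*$, so $\pi(s)=\pi(s')$ in that quotient, whence $s=s'$ by irredundance of $S(\ell)$. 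Thus the parameter pair $(r,s)$ is recovered, proving irredundance. Case (ii) is the same in spirit: $c_1=r=r'$ directly; from the third coordinate $s-1=n_{\ell/k}(x)(s'-1)$ and from the second $s=x^2s'$. The subtlety is that here both the second and third coordinates are nonzero and depend on $s$, so one must combine them: one gets $s'\neq 0$ (since $s\in S(\ell)_{<0}$ is nonzero), hence $x^2=s/s'$, and separately $n_{\ell/k}(x)=(s-1)/(s'-1)$; but $s/s'\in\bb{Q}^*\cap\ell_{\mathrm{sq}}$ already forces $\pi(s)=\pi(s')$, so $s=s'$ by irredundance of $S(\ell)$, and then automatically $x^2=1$, $n_{\ell/k}(x)=1$ and the third equation is consistent. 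So again $(r,s)$ is determined.

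The main obstacle I anticipate is purely bookkeeping: making sure that in case (ii) the two constraints coming from the second and third coordinates of Proposition \ref{bigProp}(i) are genuinely compatible and that no spurious identification slips through — in particular one should double-check that $s-1\neq 0$ and $s'-1\neq 0$ (true since $s,s'<0$) so that dividing is legitimate, and that the value $\tfrac12$ chosen for $c_3$ in case (i) is not itself of the form $n_{\ell/k}(x)\cdot\tfrac12$ for some nontrivial reason — but $n_{\ell/k}(x)=1$ is exactly what the equation yields, so there is no issue. There is also a small point worth making explicit: $\tilde{P}(\ell)$ is indeed a \emph{subset} of $P(\ell)$, which is what the containments in the first paragraph establish, and its two free parameters are $r$ (ranging over $\{r\in\bb{Q}\mid r<\tfrac12\}$ in (i), $\{r\in\bb{Q}\mid r>\tfrac12\}$ in (ii)) and $s$ (ranging over $S(\ell)_{<0}$); the map $(r,s)\mapsto A(\ell,c)$ is injective on isoclasses by the argument above, so the family is a genuine two-parameter family of pairwise non-isomorphic algebras, all lying in $\scr{N}_\ell(\bb{Q})$ once one recalls that $P(\ell)\subset C_\ell$ produces algebras with two-dimensional right nucleus (the $c_1\neq 1$ condition, which holds since $c_1<\tfrac12$ or $c_1>\tfrac12$ with the value $1$ excluded in case (i) and allowed-but-generic in case (ii), places them outside $\scr{K}$ and $\scr{S}$).
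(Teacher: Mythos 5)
Your proposal is correct and takes essentially the same route the paper's (one-line) proof indicates: membership in $P(\ell)$ by direct verification of the defining inequalities, and irredundance via Proposition \ref{bigProp}(i) together with the fact that $S(\ell)$ is a transversal of $(\bb{Q}^*/(\bb{Q}^*\cap\ell_{\mathrm{sq}}))^\circ$. The only detail worth making explicit is that in case (i) the inequality $\tfrac{1}{2}<-s$ relies on $S(\ell)\subset Z$ consisting of integers, so that $s\leq -1$; with that noted, the argument is complete.
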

	
	\begin{proof}
	This corollary is straightforward consequence of Proposition \ref{bigProp} (i), the definition of $P(\ell)$ and the fact that $S(\ell)$ is a transversal of $(\bb{Q}^*/(\bb{Q}^*\cap \ell_\mathrm{sq}))^\circ$.
	\end{proof}
	
	\begin{rem}
	Given $c=(c_1,c_2,c_3)\in C_\ell$ we have from \cite[Prop 3.3]{dieterich2017onFour} that $A(\ell,c)\in \scr{K}(\bb{Q})$ if and only if $c_1=c_3=0$ and $A(\ell,c)\in \scr{S}(\bb{Q})$ if and only if $(c_1,c_2)=(1,0)$. Thus, for each $c\in \tilde{P}(\ell)$ as above we have $A(\ell,c)\in\scr{N}_\ell(\bb{Q})$.
	\end{rem}
	
	We now show how further families of triples in $C_\ell$ can be found for particular $\ell$ by choosing $c$ such that various number theoretic obstructions guarantee that the system (\ref{eqnSystem}) only admits the trivial solution.
	
	\begin{prop}
	\label{propNonAss2}
	Take $z\in Z$ such that $m_p(z)>0$ for some $p\in\bb{P}_3$, and fix such $p$. Then, if $c=(c_1,c_2,c_3)\in\bb{Z}^3$ satisfies $1-2c_1\equiv 1\pmod p$, $c_2\equiv -1\pmod p$, $c_3\equiv 0\pmod p$ we have $c\in C_\ell$.
	\end{prop}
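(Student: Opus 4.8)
The plan is to show that the \emph{first} equation of (\ref{eqnSystem}), i.e.
\[
x_1^2 + z(1-2c_1)x_2^2 - (c_2+c_3)y_1^2 + z(c_3-c_2)y_2^2 = 0,
\]
already has no nontrivial rational solution; since a solution of the system (\ref{eqnSystem}) must in particular satisfy this equation, that immediately gives $c\in C_\ell$. The equation is homogeneous, so as noted after (\ref{eqnSystem}) it is enough to rule out a primitive integral solution $(x_1,x_2,y_1,y_2)\in\bb{Z}^4$, and I would do this by a short descent modulo the fixed prime $p\in\bb{P}_3$, in the spirit of Fermat's two–squares obstruction.

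First I would translate the hypotheses modulo $p$. Since $p$ is odd, $1-2c_1\equiv 1\pmod p$ forces $c_1\equiv 0\pmod p$, and combined with $c_2\equiv -1$, $c_3\equiv 0\pmod p$ this gives $1-2c_1\equiv 1$, $c_2+c_3\equiv -1$ and $c_3-c_2\equiv 1\pmod p$. Also $m_p(z)>0$ together with $z$ square-free gives $z\equiv 0\pmod p$ and $p^2\nmid z$, so I may write $z=pz'$ with $\gcd(p,z')=1$. Reducing the displayed equation modulo $p$ and using $z\equiv 0$ leaves $x_1^2+y_1^2\equiv 0\pmod p$; since $p\equiv 3\pmod 4$, $-1$ is a non-residue modulo $p$, so this forces $p\mid x_1$ and $p\mid y_1$.

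Writing $x_1=px_1'$, $y_1=py_1'$ and substituting (and using $z=pz'$ in the two middle terms), every term becomes divisible by $p$; dividing through by $p$ yields
\[
p(x_1')^2 + z'(1-2c_1)x_2^2 - (c_2+c_3)p(y_1')^2 + z'(c_3-c_2)y_2^2 = 0.
\]
Reducing this modulo $p$ and using $1-2c_1\equiv 1$, $c_3-c_2\equiv 1$ gives $z'(x_2^2+y_2^2)\equiv 0\pmod p$, and as $\gcd(p,z')=1$ this again forces $x_2^2+y_2^2\equiv 0\pmod p$, hence $p\mid x_2$ and $p\mid y_2$. Thus $p$ divides all four coordinates, contradicting primitivity, and the proof is complete. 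The argument never uses the second equation of (\ref{eqnSystem}); the only point that needs care is bookkeeping the single factor of $p$ coming from $z$, so that after the division by $p$ the coefficients of $x_2^2$ and $y_2^2$ are indeed units modulo $p$ — beyond that there is no real obstacle.
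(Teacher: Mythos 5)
Your proof is correct and follows essentially the same route as the paper: reduce the first equation of (\ref{eqnSystem}) modulo $p$ to get $x_1^2+y_1^2\equiv 0\pmod p$, conclude $p\mid x_1,y_1$ from $p\in\bb{P}_3$, then use that $z$ is square-free (so $m_p(z)=1$) to extract a single factor of $p$ and deduce $x_2^2+y_2^2\equiv 0\pmod p$, contradicting primitivity. Your bookkeeping with $z=pz'$ is in fact slightly cleaner than the paper's version, which contains apparent typos ($x_1^2+y_2^2$ for $x_1^2+y_1^2$, and $p\mid x_1,x_2$ for $p\mid x_1,y_1$).
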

	
	\begin{proof}
	Suppose $c$ is as above and, towards a contradiction, that (\ref{eqnSystem}) admits a primitive integral solution $(x_1,x_2,y_1,y_2)$. Considering the first equation of (\ref{eqnSystem}) modulo $p$ we get $x_1^2+y_2^2\equiv 0\pmod p$. Since $p\in\bb{P}_3$ we get, by \cite[Lemma 2.14]{niven1991number} that $p\mid x_1,x_2$. Thus, \[p^2\mid x_1^2-(c_2+c_3)y_1^2=z((c_2-c_3)y_2^2-(1-2c_1)x_2^2).\] Since $z$ is square-free, we get $p\mid (c_2-c_3)y_2^2-(1-2c_1)x_2^2$. By assumptions on $c_1,c_2,c_3$ we get $y_2^2+x_2^2\equiv 0\pmod p$ so $p\mid x_2,y_2$, contradiction to $(x_1,x_2,y_1,y_2)$ being a primitive solution of (\ref{eqnSystem}).
	\end{proof}
	
	\begin{rem}
	Note that, for the same reason as stated in the remark after Corollary \ref{corNonAss1}, for all triples $c$ produced in Proposition \ref{propNonAss2} we have $A(\ell,c)\in\scr{N}_\ell(\bb{Q})$.
	\end{rem}
	
	In the remaining part of this section we investigate the case $\ell=\bb{Q}(i)$ further.
	
	\begin{prop}
	\label{propNonAss3}
	\begin{itemize}
	\item[\emph{(i)}] The one-parameter family $P_1(\bb{Q}(i)):=\left.\left\{\left(\tfrac{1-q}{2},0,-1\right)\right|q\in \bb{Q}_{>0}\setminus Q_2\right\}$ satisfies $P_1(\bb{Q}(i))\subset C_{\bb{Q}(i)}$ and is irredundant.
	
	\item[\emph{(ii)}] The one-parameter family \[P_2(\bb{Q}(i)):=\{(1,n,0)\mid n\in Z_{<0}\emph{ such that there exists } p\in\bb{P}_3\emph{ with } m_p(n)=1\}\] satisfies $P_2(\bb{Q}(i))\subset C_{\bb{Q}(i)}$ and is irredundant.
	\end{itemize}
	
	\end{prop}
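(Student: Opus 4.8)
The plan is to use the explicit criterion ``$c\in C_\ell$ if and only if the homogeneous system (\ref{eqnSystem}) has only the trivial rational solution'', specialised to $z=-1$, i.e. $\ell=\bb{Q}(i)$; by homogeneity it suffices to exclude primitive integral solutions. For each of the two prescribed shapes of $c$ the second equation of (\ref{eqnSystem}) degenerates and forces one variable to vanish, after which the first equation either becomes a sum of non-negative terms equal to $0$ (forcing triviality) or exhibits a fixed rational number as a sum of two rational squares, which is ruled out by the hypothesis on that rational. Irredundance of both families will be read off from Proposition \ref{bigProp} (i): the relevant pairs of triples agree in the first coordinate, so the isomorphism condition reduces to a certain rational number being a square in $\bb{Q}(i)$, which we settle with the elementary description of the rational squares inside $\bb{Q}(i)$.

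For (i), put $c=\left(\tfrac{1-q}{2},0,-1\right)$ with $q\in\bb{Q}_{>0}\setminus Q_2$. With $z=-1$ one computes $z(1-2c_1)=-q$, $-(c_2+c_3)=1$, $z(c_3-c_2)=1$ and $1-c_1=\tfrac{1+q}{2}\neq 0$ since $q>0$, so (\ref{eqnSystem}) reads $x_1^2-qx_2^2+y_1^2+y_2^2=0$ together with $x_1x_2=0$. If $x_2=0$ the first equation gives $x_1=y_1=y_2=0$; if $x_1=0$ and the solution is non-trivial then $x_2\neq 0$ and $q=\left(\tfrac{y_1}{x_2}\right)^2+\left(\tfrac{y_2}{x_2}\right)^2\in Q_2$, a contradiction. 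Hence $P_1(\bb{Q}(i))\subset C_{\bb{Q}(i)}$. For irredundance, an isomorphism $A(\ell,(\tfrac{1-q}{2},0,-1))\cong A(\ell,(\tfrac{1-q'}{2},0,-1))$ would, by Proposition \ref{bigProp} (i), force the first coordinates to coincide, i.e. $q=q'$.

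For (ii), put $c=(1,n,0)$ with $n\in Z_{<0}$ admitting a prime factor $p\in\bb{P}_3$ (so $m_p(n)=1$, as $n$ is square-free). With $z=-1$ one gets $z(1-2c_1)=1$, $-(c_2+c_3)=-n$, $z(c_3-c_2)=n$ and $1-c_1=0$, so (\ref{eqnSystem}) becomes $|n|y_2^2=x_1^2+x_2^2+|n|y_1^2$ together with $ny_1y_2=0$, hence $y_1y_2=0$. If $y_2=0$ the right-hand side is a sum of non-negative terms equal to $0$ and the solution is trivial; if $y_1=0$ and the solution is non-trivial then $y_2\neq 0$ and $|n|=\left(\tfrac{x_1}{y_2}\right)^2+\left(\tfrac{x_2}{y_2}\right)^2\in Q_2$. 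Invoking the dictionary $\tfrac{a}{b}\in Q_2\Leftrightarrow ab\in N_2$ together with Fermat's criterion that a positive integer lies in $N_2$ exactly when all its exponents $m_p$ at primes $p\in\bb{P}_3$ are even (both recalled in the proof of Proposition \ref{propSkewFields2}), membership $|n|\in Q_2$ would force $m_p(|n|)$ to be even for every $p\in\bb{P}_3$, contradicting $m_p(|n|)=1$. Hence $P_2(\bb{Q}(i))\subset C_{\bb{Q}(i)}$. For irredundance, an isomorphism $A(\ell,(1,n,0))\cong A(\ell,(1,n',0))$ yields, by Proposition \ref{bigProp} (i), some $x\in\bb{Q}(i)^*$ with $n=x^2n'$; writing $x=a+bi$ with $a,b\in\bb{Q}$, the relation $x^2=\tfrac{n}{n'}\in\bb{Q}$ forces $2ab=0$, and since $\tfrac{n}{n'}>0$ (as $n,n'<0$) we must have $b=0$, whence $\tfrac{n}{n'}\in\bb{Q}_{\mathrm{sq}}$; this is impossible for distinct square-free integers $n\neq n'$.

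I expect no real obstacle here, since the computations are mechanical. The points that deserve care are: checking in each case that the degenerate second equation genuinely forces a variable to vanish and that the surviving first equation has the claimed sign behaviour; justifying the passage between $Q_2$ and $N_2$ via the stated dictionary so that Fermat's criterion can be applied to the integer $|n|$; and the short observation that a positive rational which is a square in $\bb{Q}(i)$ already lies in $\bb{Q}_{\mathrm{sq}}$, which is precisely what drives the irredundance in part (ii).
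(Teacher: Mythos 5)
Your proof is correct and follows essentially the same route as the paper's: the degenerate second equation forces $x_1x_2=0$ (resp.\ $y_1y_2=0$), after which the first equation is either a vanishing sum of non-negative terms or exhibits $q$ (resp.\ $|n|$) as a sum of two rational squares, contradicting the hypothesis. The only minor variations are in part (ii), where the paper instead runs a $p$-adic valuation argument on a primitive integral solution (using that $m_p(x_1^2+x_2^2)$ is even for $p\in\mathbb{P}_3$) rather than passing through the $Q_2$--$N_2$ dictionary, and in the irredundance of (ii), where the paper simply observes that the middle components lie in the transversal $S(\mathbb{Q}(i))$ from Proposition \ref{propFields} instead of computing the rational squares of $\mathbb{Q}(i)$ by hand; both of your substitutes are sound.
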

	
	\begin{proof}
	(i): Let $q\in\bb{Q}_{>0}\setminus Q_2$ and set $(c_1,c_2,c_3)=\left(\tfrac{1-q}{2},0,-1\right)$. Then the system (\ref{eqnSystem}) looks as follows
	\begin{equation}
		\label{eqnSystemNonAss3}
		\begin{cases}
		0=x_1^2-qx_2^2+y_1^2+y_2^2\\
		\left(\tfrac{q+1}{2}\right)x_1x_2=0
		\end{cases}
	\end{equation}
	and the second equation implies that a solution $(x_1,x_2,y_1,y_2)\in \bb{Q}^4$ must satisfy $x_1=0$ or $x_2=0$.
	
	If $x_2=0$ then the first equation in (\ref{eqnSystemNonAss3}) is $0=x_1^2+y_1^2+y_2^2$ which implies $x_1=y_1=y_2=0$. If $x_1=0$ then the first equation becomes $qx_2^2=y_1^2+y_2^2$, which implies $q\in Q_2$ unless $x_2=0$, which was seen to imply that $y_1=y_2=0$. Thus, the system (\ref{eqnSystemNonAss3}) only admits the trivial solution and we conclude that $P_1(\bb{Q}(i))\subset C_{\bb{Q}(i)}$.
	
	For irredundance, we merely note that the elements of $P_1(\bb{Q}(i))$ all have different first components whence the claim follows immediately from Proposition \ref{bigProp} (i).
	
	(ii): For $(c_1,c_2,c_3)=\left(1,n,0\right)$ the system (\ref{eqnSystem}) becomes
			\begin{equation}
				\label{eqnSystemNonAss4}
				\begin{cases}
				0=x_1^2+x_2^2-ny_1^2+ny_2^2\\
				0=ny_1y_2
				\end{cases}
			\end{equation}
		and we assume that (\ref{eqnSystemNonAss4}) admits a primitive integral solution $(x_1,x_2,y_1,y_2)$. Then, by the second equation, we have $y_1=0$ or $y_2=0$. If $y_2=0$ then the first equation implies $x_1=x_2=y_1=0$. If $y_1=0$, then the first equation implies $-ny_2^2=x_1^2+x_2^2$. Fix $p\in \bb{P}_3$ such that $m_p(n)=1$. Then, $p\mid x_1^2+x_2^2$ and hence $m_p(x_1^2+x_2^2)$ is even. By assumption of $(x_1,x_2,y_1,y_2)$ being primitive, we have $p\nmid y_2$. Thus, $m_p(-ny_2^2)$ is equal to $1$ while $m_p(x_1^2+x_2^2)$ is even, contradiction, and we conclude that $P_2(\bb{Q}(i))\subset C_{\bb{Q}(i)}$. 
		
		For irredundance, we note that the middle component runs through a subset of the transversal $S(\bb{Q}(i))$ of $(\bb{Q}^*/(\bb{Q}_{\mathrm{sq}}^*\cap \bb{Q}^*))^\circ$ given in Proposition \ref{propFields}.
	\end{proof}
	
	Considering the first components of elements in $\tilde{P}(\bb{Q}(i))$, $P_1(\bb{Q}(i))$ and $P_2(\bb{Q}(i))$ (the families obtained in Corollary \ref{corNonAss1} and Proposition \ref{propNonAss3}) we conclude with Proposition \ref{bigProp} that $\tilde{P}(\bb{Q}(i))\cup P_1(\bb{Q}(i))$ and $P_1(\bb{Q}(i))\cup P_2(\bb{Q}(i))$ are irredundant subsets of $C_{\bb{Q}(i)}$. Since the third component of a given element in $P_2(\bb{Q}(i))$ is $0$ and the third component of a given element in $\tilde{P}(\bb{Q}(i))$ is non-zero we conclude with Proposition \ref{bigProp} that $\tilde{P}(\bb{Q}(i))\cup P_2(\bb{Q}(i))$ is irredundant and hence the family defined by $F(\bb{Q}(i))=\tilde{P}(\bb{Q}(i))\cup P_1(\bb{Q}(i))\cup P_2(\bb{Q}(i))$ is an irredundant four-parameter family of admissible triples in $C_{\bb{Q}(i)}$. Furthermore, $A(\bb{Q}(i),c)\in\scr{N}_{\bb{Q}(i)}(\bb{Q})$ for each $c\in F(\bb{Q}(i))$.
	
	In this section, we studied $\scr{N}(\bb{Q})$ locally, i.e. via the components of its covering by local subcategories in (\ref{eqnCovering}) indexed by $\scr{L}$. For each field $\ell\in\scr{L}$ we studied the system (\ref{eqnSystem}) with the production of admissible triples $c\in C_\ell$ such that $A(\ell,c)\in \scr{N}(\ell/\bb{Q})$ in mind. The nature of $\bb{Q}$ gives rise to two complexities: since $|\scr{L}|=\infty$ there is an abundance of local subcategories to consider and since $\bb{Q}$ is far from being algebraically closed, there should be many possibilites of $c$ such that (\ref{eqnSystem}) only admits the trivial solution. By varied approaches from elementary number theory we produced parametrized families of triples $c\in C_\ell$ such that $A(\ell,c)\in \scr{N}_\ell(\bb{Q})$ for each $\ell$, establishing richness of $\scr{N}_\ell(\bb{Q})$, but a classification of $\scr{N}_\ell(\bb{Q})$ seems distant.
	\bibliographystyle{siam}
	\bibliography{bibl}
\end{document}